\documentclass{amsart}
\usepackage{graphicx}
\usepackage{amsthm}
\usepackage{amssymb}
\usepackage[bottom]{footmisc}

\newtheorem{definition}{Definition}
\newtheorem{lem}{Lemma}
\newtheorem{prop}{Proposition}
\newtheorem{theorem}{Theorem}
\newtheorem{cor}{Corollary}

\newcommand\blfootnote[1]{%
  \begingroup
  \renewcommand\thefootnote{}\footnote{#1}%
  \addtocounter{footnote}{-1}%
  \endgroup
}

\begin{document} 
\blfootnote{\textup{2000} \textit{Primary 52A21, 52A20; Secondary	 46B70}:
code}
\keywords{Banach Mazur distance}
\title{A remark on geodesics in the Banach Mazur distance}
\author{Alvaro Arias}
\author{Vladimir Kovalchuk}
\address{University of Denver}
\email{alvaro.arias@du.edu, vladimir.kovalchuk@du.edu}

\begin{abstract}
We show that there are uncountably many geodesics between any two non-isometric $n$-dimensional normed spaces.  We construct two explicit geodesics that can be used to describe all the points of the other geodesics.
\end{abstract}

\maketitle
\section{Introduction}
The (multiplicative) Banach-Mazur distance between the $n$-dimensional normed spaces $E$ and $F$ is given by 
\[
d(E,F)=\inf\left\{ \|T\|\|T^{-1}\|:T:E\to F\text{ is an isomorphism}\right\}.
\]
This quantity was introduced by Pe\l czy\'nski and it measures how close $E$ and $F$ are isomorphic to each other.  One can easily check that the infimum is attained, that  $d(E,F)=1$ iff $E$ and $F$ are isometric, and that $d(E,F)\leq d(E,H)d(H,F)$.  It follows that $\log d(E,F)$ is a distance in the Banach Mazur compactum $BM_n$, the set of isometry classes of $n$-dimensional normed spaces, that turns out to be compact with this metric.

The Banach Mazur distance has a geometric interpretation.  Suppose that $E=(\mathbb{R}^n,B_E)$ and $F=(\mathbb{R}^n,B_F)$, where $B_E$ and $B_F$ are the unit balls of $E$ and $F$.  Then one can easily check that $d(E,F)$ is the smallest number $L\geq 1$ such that there exists an invertible map $T:F\to E$ such that $B_E\subset T(B_F)\subset L B_E$.  If $T:F\to E$ attains the distance and if we replace $F$ by its isometric version $(\mathbb{R}^n,T(B_F))$, we assume without loss of generality that 
\begin{equation}\label{eq:1}
B_E\subset B_F \subset d(E,F) B_E.
\end{equation}

This means that when the normed spaces are put in this canonical position, the distance is attained by the identity map. 

\medbreak

Suppose that $(M,\rho)$ is a metric space.  The length of a path $\gamma:[a,b]\to (M,\rho)$ is defined by
\[
L(\gamma)=\sup_{a=t_0\leq t_1\leq\cdots\leq t_n=b}\sum_{i=1}^n \rho(\gamma(t_i),\gamma(t_{i-1})),
\]
where the supremum runs over all possible partitions.  By the triangle inequality, we have that 
$\rho(\gamma(a),\gamma(b))\leq L(\gamma)$.  A {\sl geodesic} between $x\in M$ and $y\in M$ is a path $\gamma$ that starts at $x$, ends at $y$, and that has length equal to $\rho(x,y)$.  

\begin{definition} A metric space $(M,\rho)$ is a {\sl geodesic space} if every two points in $M$ are joined by a geodesic.
\end{definition}

For the (multiplicative) distance in the Banach Mazur compactum $BM_n$, the relevant concept 
is the following:

\begin{definition} The $n$-dimensional normed space $H$ is an intermediate space between $E$ and $F$ if $d(E,F)=d(E,H)d(H,F).$
\end{definition}

We easily see that 

\begin{lem} \label{characterization}
A path $\gamma:[a,b]\to BM_n$ is a geodesic from $E$ to $F$ iff $\gamma(a)=E$, $\gamma(b)=F$, and for every $a=t_0<t_1<\cdots <t_n=b$, 
$d(E,F)=\prod_{i=1}^n d(\gamma(t_i),\gamma(t_{i-1}))$.
\end{lem}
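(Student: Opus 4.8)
The plan is to pass to the additive form of the metric and then upgrade a statement about a supremum into a statement about every partition. Recall that the metric on $BM_n$ is $\rho(E,F)=\log d(E,F)$, so for any partition $a=t_0<t_1<\cdots<t_n=b$ I would rewrite
\[
\sum_{i=1}^n \rho(\gamma(t_i),\gamma(t_{i-1}))=\log\prod_{i=1}^n d(\gamma(t_i),\gamma(t_{i-1})).
\]
Since $t\mapsto\log t$ is a continuous increasing bijection of $(0,\infty)$ onto $\mathbb{R}$, it commutes with suprema, so taking the sup over all partitions gives
\[
L(\gamma)=\log\Big(\sup_{a=t_0<\cdots<t_n=b}\ \prod_{i=1}^n d(\gamma(t_i),\gamma(t_{i-1}))\Big).
\]
Hence the condition $L(\gamma)=\rho(E,F)=\log d(E,F)$ is equivalent to the statement that the supremum of the partition products equals $d(E,F)$, and this is the form I would work with.

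For the direction ($\Leftarrow$), assume $\gamma(a)=E$, $\gamma(b)=F$, and $\prod_{i=1}^n d(\gamma(t_i),\gamma(t_{i-1}))=d(E,F)$ for every partition. Then the supremum above is trivially $d(E,F)$, so $L(\gamma)=\log d(E,F)=\rho(E,F)$, and $\gamma$ is a geodesic by definition.

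For the direction ($\Rightarrow$), assume $\gamma$ is a geodesic from $E$ to $F$: then $\gamma(a)=E$, $\gamma(b)=F$, and $L(\gamma)=\rho(E,F)$, so by the identity above $\sup_P\prod_i d(\gamma(t_i),\gamma(t_{i-1}))=d(E,F)$. In particular every individual partition product is $\le d(E,F)$. For the matching lower bound I would iterate the multiplicative triangle inequality $d(E,F)\le d(E,H)d(H,F)$ through the intermediate points $\gamma(t_1),\dots,\gamma(t_{n-1})$, obtaining $\prod_{i=1}^n d(\gamma(t_i),\gamma(t_{i-1}))\ge d(\gamma(a),\gamma(b))=d(E,F)$ for \emph{every} partition. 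Combining the two bounds forces equality for every partition, which is exactly the asserted condition.

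The argument is short and I do not expect a genuine obstacle. The two points that deserve care are the interchange of $\log$ with the supremum (immediate, since $\log$ is a continuous increasing bijection $(0,\infty)\to\mathbb{R}$, and for a geodesic the length is finite in any case) and the direction of the multiplicative triangle inequality, which must be used to produce the lower bound $d(E,F)$ valid on \emph{every} partition product — this is what combines with the supremum identity to pin each product down. I would also be sure to invoke the hypothesis $\gamma(a)=E$, $\gamma(b)=F$ in both directions, since without it $\rho(\gamma(a),\gamma(b))$ need not equal $\rho(E,F)$.
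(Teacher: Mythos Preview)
Your proof is correct. The paper does not actually supply a proof of this lemma (it is introduced with ``We easily see that'' and left to the reader), so there is nothing to compare against; your passage to the additive metric together with the observation that the multiplicative triangle inequality already forces $\prod_i d(\gamma(t_i),\gamma(t_{i-1}))\ge d(E,F)$ for \emph{every} partition is exactly the short argument one expects.
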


Classical spaces provide examples of geodesics.  It follows from Holder's inequality that the paths $\{\ell_p^n:1\leq p\leq 2\}$ and $\{\ell_q^n:2\leq q\leq \infty\}$ are geodesics in $BM_n$.  Indeed, if $1\leq r<s\leq \infty$, $B_{\ell_r^n}\subset B_{\ell_s^n}\subset n^{\frac{1}{r}-\frac{1}{s}}B_{\ell_r^n}$.  When $2\leq q<\infty$ we have 
\[
B_{\ell_2^n}\subset B_{\ell_q^n}\subset n^{\frac{1}{2}-\frac{1}{q}}B_{\ell_2^n}
\quad\quad \text{ and } \quad\quad 
B_{\ell_q^n}\subset B_{\ell_\infty^n}\subset n^{\frac{1}{q}} B_{\ell_q^n}.
\]
This implies that $d(\ell_2^n,\ell_q^n)\leq  n^{\frac{1}{2}-\frac{1}{q}}$ and that $d(\ell_q^n,\ell_\infty^n)\leq n^{\frac{1}{q}}$.
Since 
\[
\sqrt{n}=d(\ell_2^n,\ell_\infty^n)\leq d(\ell_2^n,\ell_q^n)d(\ell_q^n,\ell_\infty^n)\leq n^{\frac{1}{2}-\frac{1}{q}} n^{\frac{1}{q}}   =\sqrt{n},
\]
we conclude that $d(\ell_2^n,\ell_q^n) =  n^{\frac{1}{2}-\frac{1}{q}}$ and  $d(\ell_q^n,\ell_\infty^n) = n^{\frac{1}{q}}$.  If $2<q_1<q_2<\infty$, then $B_{\ell_{q_1}}^n\subset B_{\ell_{q_2}}^n\subset
n^{\frac{1}{q_1}-\frac{1}{q_2}} B_{\ell_{q_1}}^n$ and this implies that $d(\ell_{q_1}^n,\ell_{q_2}^n)\leq n^{\frac{1}{q_1}-\frac{1}{q_2}}$.  Since
\[
n^{\frac{1}{2}-\frac{1}{q_2}}=d(\ell_2^n,\ell_{q_2}^n)\leq d(\ell_2^n,\ell_{q_1}^n)d(\ell_{q_1}^n,\ell_{q_2}^n)
\leq n^{\frac{1}{2}-\frac{1}{q_1}}n^{\frac{1}{q_1}-\frac{1}{q_2}}=n^{\frac{1}{2}-\frac{1}{q_2}},
\]
we conclude that $d(\ell_{q_1}^n,\ell_{q_2}^n) =  n^{\frac{1}{q_1}-\frac{1}{q_2}}$.  And from this, it follows that $\{\ell_q^n:2\leq q\leq \infty\}$ is a geodesic.  The case $\{\ell_p^n:1\leq p\leq 2\}$ is similar. 

Notice that the path $\{\ell_p^n:1\leq p\leq \infty\}$ is not a geodesic because $d(\ell_1^n,\ell_\infty^n)<d(\ell_1^n,\ell_2^n)d(\ell_2^n,\ell_\infty^m)=\sqrt{n}\sqrt{n}=n$ (see [4], for the case of $n=3$ see [5]).

Interpolation spaces can be used to obtain geodesics and the previous examples illustrate this for $\ell_p^n$ spaces.  The technique works in more general settings, including in operator spaces (see [1]).  Our results can be interpreted using the real interpolation $K$-method and $J$-method.  However, we prefer to use geometric language.

\section{Characterization of Intermediate Spaces}

In this section we show that there are many intermediate spaces between any two non-isomorphic $n$-dimensional normed spaces.  We first identify the ``extreme'' intermediate spaces and then we use them to determine all of them and to describe geodesics.

We fix some notation.   $E$ and $F$ are two non-isomorphic $n$-dimensional spaces that satisfy 
$B_E\subset B_F \subset d(E,F) B_E$. For $0<\lambda<1$, let $E_\lambda=(\mathbb{R}^n,B_\lambda)$ and $F_\lambda=(\mathbb{R}^n,C_\lambda)$ be the normed spaces with unit balls defined by  
\begin{equation}\label{eq:2}
B_\lambda=(d(E,F)^\lambda B_E)\cap B_F \quad \quad\text{and}\quad\quad C_\lambda=\text{Conv}\left[  B_E\bigcup \frac{1}{d(E,F)^{1-\lambda}} B_F    \right],
\end{equation}
where Conv$(S)$ is the convex hull of the set $S$.  

\begin{lem}\label{inclusion}
For $0<\lambda<1$, $B_E\subset C_\lambda\subset B_\lambda\subset B_F$
\end{lem}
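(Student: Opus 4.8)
The plan is to treat the chain $B_E\subset C_\lambda\subset B_\lambda\subset B_F$ as four separate containments, three of which are immediate and one of which --- the inclusion $C_\lambda\subset B_\lambda$ --- carries all the content. Write $d=d(E,F)$ for brevity; since $d\geq 1$ always, we have $d^\lambda\geq 1$ and $d^{1-\lambda}\geq 1$ for every $0<\lambda<1$, and these are the only inequalities about $d$ that will be needed besides \eqref{eq:1}.

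The two outer inclusions are formal. First, $B_E\subset C_\lambda$ because $C_\lambda$ is by definition the convex hull of a set that contains $B_E$. Second, $B_\lambda\subset B_F$ because $B_\lambda=(d^\lambda B_E)\cap B_F$ is contained in each of the two sets being intersected, in particular in $B_F$.

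The heart of the matter is $C_\lambda\subset B_\lambda$, and the key observation is that $B_\lambda$ is convex, being the intersection of the two convex bodies $d^\lambda B_E$ and $B_F$. Since $C_\lambda$ is the \emph{smallest} convex set containing $B_E\cup\frac{1}{d^{1-\lambda}}B_F$, it therefore suffices to show that the convex set $B_\lambda$ contains each of the two generating sets, i.e.\ that $B_E\subset B_\lambda$ and $\frac{1}{d^{1-\lambda}}B_F\subset B_\lambda$; the convex hull is then automatically trapped inside $B_\lambda$. I would verify each of these by unwinding $B_\lambda=(d^\lambda B_E)\cap B_F$ into its two constraints. For the first: $B_E\subset d^\lambda B_E$ holds because $d^\lambda\geq 1$, and $B_E\subset B_F$ is the left half of \eqref{eq:1}. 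For the second: $\frac{1}{d^{1-\lambda}}B_F\subset B_F$ holds because $d^{1-\lambda}\geq 1$, and $\frac{1}{d^{1-\lambda}}B_F\subset d^\lambda B_E$ is, after multiplying through by $d^{1-\lambda}$, exactly the assertion $B_F\subset d^{1-\lambda}d^\lambda B_E=dB_E$, which is the right half of \eqref{eq:1}.

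I do not expect a genuine obstacle here: the lemma is essentially a bookkeeping exercise showing that, once the factor $d$ in \eqref{eq:1} is split as $d^\lambda\cdot d^{1-\lambda}$, the two defining constraints $B_E\subset B_F$ and $B_F\subset dB_E$ force the interpolating bodies $C_\lambda$ and $B_\lambda$ into the claimed nesting. The only point worth a moment's care is to reduce the convex-hull inclusion to its generators via convexity of $B_\lambda$, rather than chasing an arbitrary convex combination $tx+(1-t)y$ with $x\in B_E$ and $y\in\frac{1}{d^{1-\lambda}}B_F$ --- although that alternative works just as quickly, precisely because both $x$ and $y$ have already been shown to lie in the convex set $B_\lambda$.
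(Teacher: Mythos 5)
Your proof is correct and follows essentially the same route as the paper: the two outer inclusions are read off from the definitions, and $C_\lambda\subset B_\lambda$ is reduced, via convexity of the intersection $B_\lambda=(d^\lambda B_E)\cap B_F$, to checking that $B_\lambda$ contains the two generators $B_E$ and $\frac{1}{d^{1-\lambda}}B_F$, each verified against the two constraints using $d^\lambda,d^{1-\lambda}\geq 1$ and \eqref{eq:1}. No gaps.
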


\begin{proof} Let $0<\lambda<1$ and let $d=d(E,F)$.  $B_E\subset C_\lambda$ and $B_\lambda\subset B_F$ follow from the definition of $C_\lambda$ and $B_\lambda$.  We need to show that $C_\lambda\subset B_\lambda$.  By convexity of $B_\lambda$, we only need to check that $B_E\subset B_\lambda$ and that $B_F\subset d(E,F)^{1-\lambda}B_\lambda$.  Since $d^\lambda\geq 1$, $B_E\subset d^\lambda B_E$; and since $B_E\subset B_F$ (from (\ref{eq:1})), we conclude that  $B_E\subset B_\lambda$.  To check the other inclusion, notice that $d^{1-\lambda}B_\lambda=(dB_E)\cap(d^{1-\lambda}B_F)$.  Then $B_F\subset dB_E$ from (\ref{eq:1}) and since $d^{1-\lambda}\geq1$, $B_F\subset d^{1-\lambda}B_F$.
\end{proof}

Using the notation of (\ref{eq:2}) we have

\begin{prop}\label{extreme_spaces}
For $0<\lambda<1$, the spaces $E_\lambda$ and $F_\lambda$ are intermediate spaces between $E$ and $F$.  More precisely, $d(E,E_\lambda)=d(E,F_\lambda)=d(E,F)^\lambda$ and $d(E_\lambda,F)=d(F_\lambda,F)=d(E,F)^{1-\lambda}$.
\end{prop}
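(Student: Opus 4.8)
The plan is to bound each of the four Banach--Mazur distances from above using the nested inclusions $B_E\subset C_\lambda\subset B_\lambda\subset B_F$ of Lemma \ref{inclusion}, and then to force equality via submultiplicativity, $d(E,F)\le d(E,H)d(H,F)$, together with the identity $d^\lambda\cdot d^{1-\lambda}=d$, where $d=d(E,F)$. Throughout, the distances will be realized by the identity map, so it suffices to exhibit the appropriate inclusions between balls.

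First I would treat $E_\lambda$. From Lemma \ref{inclusion} and the definition $B_\lambda=(d^\lambda B_E)\cap B_F$ we get $B_E\subset B_\lambda\subset d^\lambda B_E$, so $d(E,E_\lambda)\le d^\lambda$. For the other inclusion, as already noted in the proof of Lemma \ref{inclusion}, $d^{1-\lambda}B_\lambda=(dB_E)\cap(d^{1-\lambda}B_F)\supseteq B_F$, since $B_F\subset dB_E$ by (\ref{eq:1}) and $B_F\subset d^{1-\lambda}B_F$ because $d^{1-\lambda}\ge 1$; combined with $B_\lambda\subset B_F$ this gives $B_\lambda\subset B_F\subset d^{1-\lambda}B_\lambda$, hence $d(E_\lambda,F)\le d^{1-\lambda}$.

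Next I would treat $F_\lambda$. We have $B_E\subset C_\lambda$ from Lemma \ref{inclusion}; and since $B_E\subset d^\lambda B_E$ and $\frac{1}{d^{1-\lambda}}B_F\subset \frac{d}{d^{1-\lambda}}B_E=d^\lambda B_E$ (using $B_F\subset dB_E$), the convex set $d^\lambda B_E$ contains both generators of $C_\lambda$, so $C_\lambda\subset d^\lambda B_E$; thus $d(E,F_\lambda)\le d^\lambda$. For the last estimate, $\frac{1}{d^{1-\lambda}}B_F\subset C_\lambda$ gives $B_F\subset d^{1-\lambda}C_\lambda$, and together with $C_\lambda\subset B_F$ this yields $d(F_\lambda,F)\le d^{1-\lambda}$.

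Finally, let $H$ be either $E_\lambda$ or $F_\lambda$. By submultiplicativity and the bounds just proved, $d=d(E,F)\le d(E,H)d(H,F)\le d^\lambda d^{1-\lambda}=d$, so equality holds throughout. Since $E$ and $F$ are not isometric, $d>1$, whence $d^\lambda<d$; so the factorization $d(E,H)d(H,F)=d$ with $d(E,H)\le d^\lambda$ and $d(H,F)\le d^{1-\lambda}$ forces $d(E,H)=d^\lambda$ and $d(H,F)=d^{1-\lambda}$. There is no genuine obstacle beyond careful bookkeeping with the scalings; the one point worth flagging is that the conclusion really uses $d>1$ to upgrade ``the product of the bounds is attained'' to ``each bound is attained.''
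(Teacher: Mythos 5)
Your proposal is correct and follows essentially the same route as the paper: establish $B_E\subset B_\lambda\subset d^\lambda B_E$, $B_\lambda\subset B_F\subset d^{1-\lambda}B_\lambda$ (and the analogous chains for $C_\lambda$) to get the four upper bounds, then squeeze with submultiplicativity. The only (harmless) quibble is your closing remark: the upgrade from ``the product of the bounds is attained'' to ``each bound is attained'' needs only that both factors are positive and each is at most its bound, not that $d>1$.
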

\begin{proof} Let $0<\lambda<1$ and $d=d(E,F)$.
We start with $E_\lambda$.  We claim that $B_E\subset B_\lambda\subset d^\lambda B_E$.  The first inclusion follows from Lemma \ref{inclusion} and the second follows from the definition of $B_\lambda$.  Notice that this implies that $d(E,E_\lambda)\leq d^\lambda$.  Now we claim that 
$B_\lambda\subset B_F\subset d^{1-\lambda}B_\lambda$. The first inclusion follows from Lemma \ref{inclusion}.  The definition of $C_\lambda$ implies that $B_F\subset d^{1-\lambda}C_\lambda$.  Then by Lemma \ref{inclusion} again, $B_F\subset d^{1-\lambda}C_\lambda\subset d^{1-\lambda}B_\lambda$.   Notice that this implies that $d(E_\lambda,F)\leq d^{1-\lambda}$.  Since
$d=d(E,F)\leq d(E,E_\lambda)d(E_\lambda,F)\leq d^\lambda d^{1-\lambda}=d$, we conclude that $d(E,E_\lambda)=d^\lambda$ and that $d(E_\lambda,F)=d^{1-\lambda}$.

The proof of $F_\lambda$ is similar.  We claim that $B_E\subset C_\lambda\subset d^\lambda B_E$. 
The first inclusion follows from Lemma \ref{inclusion} and the second one follows from $C_\lambda\subset B_\lambda$ and $B_\lambda\subset d^\lambda B_E$, which we proved in the previous parragraph.  We also claim that $C_\lambda\subset B_F\subset d^{1-\lambda}C_\lambda$.  The first inclusion follows from Lemma \ref{inclusion} and the second one follows from the definition of $C_\lambda$.  Following the argument of the previous paragraph, we conclude that $d(E,F_\lambda)=d^\lambda$ and $d(F_\lambda,F)=d^{1-\lambda}$.
\end{proof}

\begin{cor}\label{geodesics}
The sets $\{B_\lambda:0\leq\lambda\leq1\}$ and $\{C_\lambda:0\leq\lambda\leq1\}$ are geodesics from $F$ to $E$.
\end{cor}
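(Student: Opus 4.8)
The plan is to reduce everything to a single inequality and then let the triangle inequality do the rest. Write $d=d(E,F)$ and, extending the notation of (\ref{eq:2}) to the endpoints $\lambda=0,1$, let $E_\lambda$ and $F_\lambda$ (for $0\le\lambda\le1$) be the normed spaces with unit balls $B_\lambda$ and $C_\lambda$. First I would record that the two families have the correct endpoints: by (\ref{eq:1}), $B_0=(d^0B_E)\cap B_F=B_E\cap B_F=B_E$ and $B_1=(dB_E)\cap B_F=B_F$, while $C_0=\text{Conv}[B_E\cup d^{-1}B_F]=B_E$ (since $d^{-1}B_F\subset B_E$) and $C_1=\text{Conv}[B_E\cup B_F]=B_F$. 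So each set, suitably parametrized (e.g.\ $\lambda\mapsto B_{1-\lambda}$), is a path from $F$ to $E$.

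The one estimate I need is: for $0\le\lambda\le\mu\le1$,
\[
d(E_\lambda,E_\mu)\le d^{\,\mu-\lambda}\qquad\text{and}\qquad d(F_\lambda,F_\mu)\le d^{\,\mu-\lambda}.
\]
By the geometric description of the Banach--Mazur distance recalled in the introduction (take the connecting map to be the identity), it suffices to verify the inclusions
\[
B_\lambda\subset B_\mu\subset d^{\,\mu-\lambda}B_\lambda\qquad\text{and}\qquad C_\lambda\subset C_\mu\subset d^{\,\mu-\lambda}C_\lambda .
\]
For the $B$'s: $d^\lambda B_E\subset d^\mu B_E$ gives $B_\lambda\subset B_\mu$, and since $d^{\,\mu-\lambda}B_\lambda=(d^\mu B_E)\cap(d^{\,\mu-\lambda}B_F)$ with $d^{\,\mu-\lambda}\ge1$, it contains $(d^\mu B_E)\cap B_F=B_\mu$. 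For the $C$'s: $d^{\lambda-1}B_F\subset d^{\mu-1}B_F$ gives $C_\lambda\subset C_\mu$, and $d^{\,\mu-\lambda}C_\lambda=\text{Conv}[\,d^{\,\mu-\lambda}B_E\cup d^{\mu-1}B_F\,]$ contains $\text{Conv}[\,B_E\cup d^{\mu-1}B_F\,]=C_\mu$, again using $d^{\,\mu-\lambda}\ge1$ and monotonicity of the convex hull.

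Finally I would assemble the pieces: for any partition $0=t_0<t_1<\cdots<t_n=1$,
\[
d=d(E_{t_0},E_{t_n})\le\prod_{i=1}^n d(E_{t_{i-1}},E_{t_i})\le\prod_{i=1}^n d^{\,t_i-t_{i-1}}=d,
\]
so all inequalities are equalities and $\prod_i d(E_{t_{i-1}},E_{t_i})=d(E,F)$; by Lemma \ref{characterization} the path $\lambda\mapsto E_\lambda$ is a geodesic, and the identical computation handles $\lambda\mapsto F_\lambda$. I do not expect any genuine obstacle: the substance is already contained in Proposition \ref{extreme_spaces}, and the only point to notice is that the families in (\ref{eq:2}) are totally ordered by inclusion, so the ``$\le$'' half localizes to every subinterval while the matching ``$\ge$'' is forced by the triangle inequality and the known value $d(E,F)$. (The same bound $d(E_\lambda,E_\mu)\le d^{|\mu-\lambda|}\to1$ as $\mu\to\lambda$ also shows the paths are continuous, so these are geodesics in the usual metric-space sense.)
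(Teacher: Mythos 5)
Your proof is correct and follows essentially the same route as the paper's: the key step in both is the nested inclusion $B_\lambda\subset B_\mu\subset d^{\mu-\lambda}B_\lambda$ (and its analogue for $C_\lambda$), combined with the triangle inequality. The only cosmetic differences are that the paper pins down the exact pairwise distances $d(E_{\lambda},E_{\mu})=d^{\mu-\lambda}$ using $E$ as a pivot before invoking Lemma \ref{characterization}, whereas you verify the product condition over a whole partition in one stroke, and you make the endpoint identifications $B_0=C_0=B_E$, $B_1=C_1=B_F$ explicit where the paper leaves them implicit.
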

\begin{proof} 
Let $d=d(E,F)$ and $0<\lambda_1<\lambda_2<1$.  We claim that $B_{\lambda_1}\subset B_{\lambda_2}\subset d^{\lambda_2-\lambda_1}B_{\lambda_1}$.  The first inclusion follows from the definition of $B_\lambda$.  To check the second inclusion, notice that $d^{\lambda_2-\lambda_1}B_{\lambda_1}=(d^{\lambda_2}B_E)\cap (d^{\lambda_2-\lambda_1}B_F)$ and  $B_{\lambda_2}=(d^{\lambda_2}B_E)\cap B_F$.  Since $d^{\lambda_2-\lambda_1}\geq1$, $B_F\subset  d^{\lambda_2-\lambda_1}B_F$ and this implies that $B_{\lambda_2}\subset d^{\lambda_2-\lambda_1}B_{\lambda_1}$.  

From $B_{\lambda_1}\subset B_{\lambda_2}\subset d^{\lambda_2-\lambda_1}B_{\lambda_1}$ it follows that $d(E_{\lambda_1},E_{\lambda_2})\leq d^{\lambda_2 - \lambda_1}$.  Since $d(E,E_{\lambda_1})=d^{\lambda_1}$ and 
$
d^{\lambda_2} =d(E,E_{\lambda_2})\leq d(E,E_{\lambda_1})d(E_{\lambda_1},E_{\lambda_2})\leq d^{\lambda_1} d^{\lambda_2-\lambda_1}=d^{\lambda_2},
$
we conclude that $d(E_{\lambda_1},E_{\lambda_2})=d^{\lambda_2-\lambda_1}$.

A similar argument shows that $C_{\lambda_1}\subset C_{\lambda_2}\subset d^{\lambda_2-\lambda_1}C_{\lambda_1}$ and this implies that $d(F_{\lambda_1},F_{\lambda_2})=d^{\lambda_2-\lambda_1}$.
\end{proof}

\noindent {\bf Remark. } One can check that the norms of $E_\lambda$ and $F_\lambda$ are given the real $K$-method and $J$-method ([3], pp. 96 - 105).
$$\|x\|_{E_\lambda}= K(x,d(E,F)^\lambda,E,F) \quad\text{and}\quad \|x\|_{F_\lambda}=J\left(x,\frac{1}{d(E,F)^{1-\lambda}},F,E\right)$$
\smallbreak

We now use the intermediate spaces of Proposition \ref{extreme_spaces} to describe all other intermediate spaces.  If $X$ is an intermediate space between $E$ and $F$, then $d(E,X)=d(E,F)^\lambda$ for some $\lambda\in[0,1]$.  This number and the unit balls of the previous Proposition determine the intermediate spaces.

\begin{theorem}
Suppose that $E, F, X$ are $n$-dimensional normed spaces.  Then $X$ is an intermediate space between $E$ and $F$ iff there exist $\lambda\in[0,1]$ and isometric copies of $E,F,X$ in 
$\mathbb{R}^n$ such that $d(E,X)=d(E,F)^\lambda$, $B_E\subset B_F\subset d(E,F) B_F$ and
$
C_\lambda\subset B_X\subset B_\lambda. 
$
\end{theorem}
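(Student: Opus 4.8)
The plan is to prove the two directions separately, with the nontrivial content being the ``only if'' direction.

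\medskip

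\noindent\emph{The easy direction (``if'').} Suppose we are given $\lambda\in[0,1]$ and isometric copies with $B_E\subset B_F\subset d(E,F)B_E$ and $C_\lambda\subset B_X\subset B_\lambda$. Write $d=d(E,F)$. The chain of inclusions $B_E\subset C_\lambda\subset B_X\subset B_\lambda\subset d^\lambda B_E$ (using Lemma \ref{inclusion} and the definition of $B_\lambda$) shows $d(E,X)\le d^\lambda$; and $B_X\subset B_\lambda\subset B_F\subset d^{1-\lambda}C_\lambda\subset d^{1-\lambda}B_X$ (using Lemma \ref{inclusion}, the inclusion $B_F\subset d^{1-\lambda}C_\lambda$ from the definition of $C_\lambda$, and $C_\lambda\subset B_X$) shows $d(X,F)\le d^{1-\lambda}$. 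Multiplying and using $d=d(E,F)\le d(E,X)d(X,F)\le d^\lambda d^{1-\lambda}=d$ forces both inequalities to be equalities, so $X$ is intermediate (and incidentally $d(E,X)=d^\lambda$).

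\medskip

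\noindent\emph{The hard direction (``only if'').} Assume $X$ is an intermediate space between $E$ and $F$. First put $E$ and $F$ in canonical position so that $B_E\subset B_F\subset dB_E$. Since $1\le d(E,X)\le d(E,F)=d$, we may write $d(E,X)=d^\lambda$ for a unique $\lambda\in[0,1]$; then $d(X,F)=d^{1-\lambda}$ because $X$ is intermediate. Now choose an isomorphism $T\colon X\to E$ attaining $d(E,X)$; replacing $X$ by its isometric image $(\mathbb{R}^n,T(B_X))$ we may assume $B_E\subset B_X\subset d^\lambda B_E$. The point is that this normalization at the $E$-end is \emph{compatible} with a good position at the $F$-end: I claim that automatically $B_X\subset B_F\subset d^{1-\lambda}B_X$. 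The inclusion $B_F\subset d^{1-\lambda}B_X$ is immediate since $d(X,F)=d^{1-\lambda}$ and some multiple $B_F\subset cB_X$ — but one must check the constant is exactly $d^{1-\lambda}$ with \emph{these} copies, which is where the rigidity of the intermediate condition is used: if either $B_X\subset B_F$ failed or the constant at the $F$-end exceeded $d^{1-\lambda}$, then composing the identity-type maps would give $d(E,F)<d$, a contradiction. Thus we arrive at $B_E\subset B_X\subset d^\lambda B_E$ together with $B_X\subset B_F\subset d^{1-\lambda}B_X$, and of course $B_E\subset B_F\subset dB_E$.

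\medskip

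\noindent\emph{Deriving the sandwich $C_\lambda\subset B_X\subset B_\lambda$.} From $B_X\subset d^\lambda B_E$ and $B_X\subset B_F$ we get $B_X\subset (d^\lambda B_E)\cap B_F=B_\lambda$. For the other inclusion, $B_E\subset B_X$ directly, and from $B_F\subset d^{1-\lambda}B_X$ we get $\frac{1}{d^{1-\lambda}}B_F\subset B_X$; since $B_X$ is convex, it contains $\mathrm{Conv}\big[B_E\cup\frac{1}{d^{1-\lambda}}B_F\big]=C_\lambda$. This gives $C_\lambda\subset B_X\subset B_\lambda$, completing the proof.

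\medskip

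\noindent The main obstacle is the claim in the second paragraph that normalizing $X$ at the $E$-end forces the \emph{correct} position at the $F$-end. This is really the heart of the theorem and is exactly the mechanism that made Proposition \ref{extreme_spaces} work; the argument is the same chaining-of-distances contradiction, but one has to be careful that the same pair of copies $(B_X,B_F)$ simultaneously realizes $d(E,X)=d^\lambda$ on one side and $d(X,F)=d^{1-\lambda}$ on the other, using that $d(E,X)d(X,F)=d(E,F)$ with no slack. Everything else is routine manipulation of inclusions of convex bodies.
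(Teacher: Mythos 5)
Your ``if'' direction is correct and coincides with the paper's. The gap is in the ``only if'' direction, at exactly the step you flag as the heart of the matter: after fixing $E$ and $F$ in canonical position and then normalizing $X$ so that $B_E\subset B_X\subset d^\lambda B_E$ via some optimal map $X\to E$, it is \emph{not} automatic that $B_X\subset B_F\subset d^{1-\lambda}B_X$, and the justification you offer does not work. If the identity from $X$ to $F$ in these positions had a constant larger than $d^{1-\lambda}$ (or if $B_X\not\subset B_F$), composing identities would only yield a \emph{weaker} upper bound on $d(E,F)$ than $d$, not a bound below $d$; there is no contradiction with $d(E,X)d(X,F)=d(E,F)$, because nothing forces the identity to be an optimal (or even near-optimal) map between $X$ and $F$ in the positions you chose. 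Concretely the claim is false: take $E=\ell_2^2$, $B_F=[-1,1]^2$, $d=\sqrt2$, and $B_X=C_\lambda$ in its good position; rotating $B_X$ by $45^\circ$ preserves $B_E\subset B_X\subset d^\lambda B_E$ (the Euclidean ball is rotation-invariant, so the rotated copy is still an optimal position relative to $E$) but destroys $B_X\subset B_F$, since the bulges of $C_\lambda$ toward the corners of the square now point at the edge midpoints. So an arbitrary optimal positioning of $X$ relative to $E$ need not be compatible with the already-fixed $F$.

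The fix is to reverse the order of normalization, which is what the paper does: choose optimal maps $T\colon X\to F$ and $S\colon E\to X$ \emph{first}, keep $F$ fixed, and declare the isometric copies of $X$ and $E$ to be $(\mathbb{R}^n,T(B_X))$ and $(\mathbb{R}^n,T(S(B_E)))$. Then $B_X\subset B_F\subset d^{1-\lambda}B_X$ and $B_E\subset B_X\subset d^\lambda B_E$ hold by construction, and the canonical position $B_E\subset B_F\subset dB_E$ is \emph{derived} by concatenating these two chains (this is where $d(E,X)d(X,F)=d(E,F)$ enters, guaranteeing the composite constant is exactly $d$), rather than imposed at the outset. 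Your final paragraph deducing $C_\lambda\subset B_X\subset B_\lambda$ from the two chains of inclusions is correct and identical to the paper's.
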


\begin{proof}
Suppose that $X$ is an intermediate space between $E$ and $F$.  Then there exists $\lambda\in[0,1]$ such that $d(E,X)=d(E,F)^\lambda$ and $d(X,F)=d(E,F)^{1-\lambda}$.  Find $T:X\to F$ and $S:E\to X$  such that $\|T\|=1$, $\|T^{-1}\|=d(E,F)^{1-\lambda}$, $\|S\|=1$ and 
$\|S^{-1}\|=d(E,F)^\lambda$.  Then $T(B_X)\subset B_F\subset d(E,F)^{1-\lambda}T(B_X)$ and 
$S(B_E)\subset B_X\subset d(E,F)^\lambda S(B_E).$

Replacing $E$ and $X$ by their isometries $(\mathbb{R}^n,T(S(B_E)))$ and $(\mathbb{R}^n,T(B_X))$ we get 
\[ B_E\subset B_X\subset d(E,F)^\lambda B_E\quad\text{ and }\quad
B_X\subset B_F\subset d(E,F)^{1-\lambda} B_X.
\]
Combining these inclusions we get that $B_E\subset B_F\subset d(E,F) B_E$.  Moreover, we clearly have $B_X\subset \left(d(E,F)^\lambda B_E     \right)\cap B_F$ and
$\text{Conv}\left[ B_E\bigcup \frac{1}{d(E,F)^{1-\lambda}} B_F \right]\subset B_X$.

\medbreak

On the other hand, suppose that there are isometric versions of $E$, $F$, and $X$ that satisfy
$B_E\subset B_F\subset d(E,F) B_E$ and $C_\lambda\subset B_X\subset B_\lambda$.  
From the proof of Proposition \ref{extreme_spaces}, 
we have that $B_E\subset C_\lambda\subset d(E,F)^\lambda B_E$ and
$B_E\subset B_\lambda\subset d(E,F)^\lambda B_E$.  Therefore we have that 
$B_E\subset C_\lambda\subset B_X\subset B_\lambda\subset d(E,F)^\lambda B_E$, that implies that $d(E,X)\leq d(E,F)^\lambda$.

Similarly, $C_\lambda\subset B_F\subset d(E,F)^{1-\lambda}C_\lambda$ and $B_\lambda\subset B_F\subset d(E,F)^{1-\lambda}B_\lambda$.  Then 
\[
B_X\subset B_\lambda\subset B_F\subset d(E,F)^{1-\lambda}C_\lambda\subset d(E,F)^{1-\lambda} B_X,
\]
and this implies that $d(X,F)\leq d(E,F)^{1-\lambda}$.  Since $d(E,F)\leq d(E,X)d(X,F)\leq d(E,F)^\lambda d(E,F)^{1-\lambda} =d(E,F)$, we conclude that $X$ is an intermediate space between $E$ and $F$.
\end{proof}

We now refine Lemma \ref{inclusion}. 

\begin{lem} \label{separation}
Suppose $E$ and $F$ are non-isometric $n$-dimensional normed spaces satisfying $B_E\subset B_F \subset d(E,F)B_E$.  Then $C_{\lambda} \subsetneq B_{\lambda}$ for all $\lambda \in (0,1)$.
\end{lem}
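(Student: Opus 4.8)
The plan is to argue by contradiction: assuming $C_\lambda = B_\lambda$ for some $\lambda\in(0,1)$, I will deduce that $E$ and $F$ are isometric, contrary to hypothesis. Write $\|\cdot\|_E,\|\cdot\|_F$ for the Minkowski functionals of $B_E,B_F$ and set $d=d(E,F)>1$; the canonical position $B_E\subset B_F\subset dB_E$ says precisely that $\|x\|_F\le\|x\|_E\le d\|x\|_F$ for all $x\in\mathbb{R}^n$. (We may assume $n\ge 2$; otherwise $BM_n$ is a single point and the hypothesis is vacuous.) Note first that $d=\sup\{\|x\|_E/\|x\|_F:x\neq 0\}$: indeed $\|x\|_E\le d\|x\|_F$ by hypothesis, and if $\|x\|_E\le c\|x\|_F$ held for all $x$ with $c<d$, we would get $B_E\subset B_F\subset cB_E$ and hence $d(E,F)\le c<d$. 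Consequently $\inf\{\|z\|_F:z\in\partial B_E\}=1/d$, attained by compactness at some $z_0\in\partial B_E$ (here $\partial B_E=\{\|x\|_E=1\}$); moreover $1/d\le\|z\|_F\le 1$ for every $z\in\partial B_E$.

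The crux, and the only step I expect to require real work, is the following claim: \emph{if $C_\lambda=B_\lambda$, then no $z\in\partial B_E$ satisfies $1/d<\|z\|_F\le d^{-\lambda}$.} To prove it, suppose such a $z$ exists and put $y=d^\lambda z$, so that $\|y\|_E=d^\lambda$ and $\|y\|_F=d^\lambda\|z\|_F\le 1$; hence $y\in(d^\lambda B_E)\cap B_F=B_\lambda=C_\lambda$. Since $C_\lambda=\mathrm{Conv}\bigl(B_E\cup d^{\lambda-1}B_F\bigr)$, write $y=tx_1+(1-t)x_2$ with $x_1\in B_E$, $x_2\in d^{\lambda-1}B_F$ and $t\in[0,1]$. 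Because $\|x_2\|_E\le d\|x_2\|_F\le d\cdot d^{\lambda-1}=d^\lambda$, the triangle inequality gives $d^\lambda=\|y\|_E\le t\|x_1\|_E+(1-t)\|x_2\|_E\le t+(1-t)d^\lambda$, which forces $t(d^\lambda-1)\le 0$, hence $t=0$ since $d^\lambda>1$. Then $y=x_2\in d^{\lambda-1}B_F$, i.e. $d^\lambda\|z\|_F=\|y\|_F\le d^{\lambda-1}$, so $\|z\|_F\le 1/d$, contradicting $\|z\|_F>1/d$.

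To finish, recall that $\partial B_E$ is connected (it is homeomorphic to $S^{n-1}$ for $n\ge 2$) and that $z\mapsto\|z\|_F$ is continuous, so its image on $\partial B_E$ is a subinterval of $[1/d,1]$. By the claim this interval is disjoint from $(1/d,d^{-\lambda}]$, and by the choice of $z_0$ it contains the value $1/d$; since $1/d<d^{-\lambda}$, the only possibility is that the image equals $\{1/d\}$. Thus $\|z\|_F=1/d$ for every $z\in\partial B_E$, whence $\|x\|_F=\|x\|_E/d$ for all $x$ and $B_F=dB_E$; but then $x\mapsto dx$ maps $E$ isometrically onto $F$, contradicting that $E$ and $F$ are non-isometric. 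Since $C_\lambda\subset B_\lambda$ by Lemma \ref{inclusion}, this proves $C_\lambda\subsetneq B_\lambda$ for every $\lambda\in(0,1)$.
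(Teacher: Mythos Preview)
Your proof is correct, but it proceeds along a genuinely different line from the paper's. The paper argues \emph{constructively}: it takes a contact point $x\in\partial B_E\cap\partial B_F$, picks a nearby $x_n\in\partial B_F\setminus\partial B_E$ (which exists by connectedness of $\partial B_F$), observes that $x_n\in B_\lambda$ since $x_n$ is still in the interior of $d^\lambda B_E$, and then uses a supporting functional of $B_F$ at $x_n$ to show $x_n\notin C_\lambda$. Your argument is instead \emph{global and by contradiction}: assuming $C_\lambda=B_\lambda$, you show via a direct $\|\cdot\|_E$--estimate on convex combinations that the continuous function $z\mapsto\|z\|_F$ on $\partial B_E$ must miss the interval $(1/d,d^{-\lambda}]$, and then connectedness of $\partial B_E$ forces the function to be constant, whence $B_F=dB_E$. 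Both proofs hinge on the same convex-combination trick (forcing the coefficient of the $B_E$-part to be extremal), but you apply $\|\cdot\|_E$ to the combination while the paper applies a dual functional of $F$. The paper's version has the minor advantage of actually locating a point of $B_\lambda\setminus C_\lambda$ on $\partial B_F$, though this extra information is not used later; your version is arguably cleaner and avoids any appeal to separating hyperplanes.
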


\begin{proof}

Let $\mathcal{C}:= \partial B_E \cap \partial B_F$. Then $\mathcal{C}$ is closed, non-empty (we need to have contact points between the spheres to attain the distance) and $\mathcal{C} \neq \partial B_F$ (or $d(E,F)=1$). Then $\partial B_f \setminus \mathcal{C} $ is open in the relative topology of $\partial B_F$ and $\partial B_F \setminus \mathcal{C}$ is not closed (because $\partial B_F$ is connected). Find $x\in\mathcal{C}$ and $x_n \in \partial B_F\setminus \mathcal{C}$ such that $x_n \rightarrow x$.

Since $x \in \partial B_E$, $x \in d(E,F)^{\lambda}B_E^{\circ}$ (the interior) and we can find $n$ large enough so that $x_n \in d(E,F)^\lambda B_E^{\circ}$.  It is clear that this $x_n$ belongs to $B_\lambda$ and we will show that it does not belong to $C_\lambda$.  Find $f: \mathbb{R}^n \to \mathbb{R}$ linear, separating $x_n$ from $B_F^\circ$.  Then we can assume that $f(x_n)=1$ and that for all $y \in B_F, f(y) \leq 1$. If $x_n$ belonged to $C_{\lambda}$ we would write it as $x_n = \alpha y_1 + (1-\alpha)y_2$ for some $\alpha \in [0,1], y_1 \in B_E, y_2 \in \frac{1}{d(E,F)^{1-\lambda}}B_F$.  Note that $f(y_1)\leq 1$ and $f(y_2)<1$. Then  $1=f(x_n)=\alpha f(y_1)+ (1-\alpha)f(y_2)$ implies that $\alpha=1$ and $x_n=y_1$ which is not possible.  Therefore, $x_n\not\in C_\lambda$.
\end{proof}

\section{Main Result}

In this section we show there are uncountably many different geodesics between two non-isometric $n$-dimensional normed spaces.   We will start recalling some standard definitions.

\begin{definition}
 Set $(x,y):=\{ \alpha x + (1 - \alpha) y : \alpha \in (0,1) \}$.  A face $F$ of a convex set $K$ is a subset of $K$ satisfying the following: if $z \in F$, $x,y \in K$ and $ z \in (x,y)$ then $x,y \in F $.  A face $F$ is {\sl exposed} if there exists a separating hyperplane $H$ such that $F= H \cap K$.  If $F$ has a non-empty relative interior in $F\cap H$, $F$ is an $(n-1)$-dimensional face of $K$. The set of $(n-1)$-dimensional faces is denoted by $\mathrm{F^{n-1}}(K)$. 
\end{definition}

\begin{lem} \label{perfect faces}
The unit ball of a finite dimensional normed space $E$ has at most countably many $(n-1)$-dimensional faces.   
\end{lem}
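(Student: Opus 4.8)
The plan is to show that each $(n-1)$-dimensional face of $B_E$ contains a relatively open subset of $\partial B_E$ (namely its own relative interior inside the supporting hyperplane), and that distinct $(n-1)$-dimensional faces give essentially disjoint such sets, so one can assign to each face a rational point and conclude countability by injectivity. First I would fix a face $G \in \mathrm{F}^{n-1}(B_E)$ with supporting hyperplane $H_G$, so $G = H_G \cap B_E$ has non-empty relative interior in the $(n-1)$-dimensional affine subspace $H_G$. Pick a point $p_G$ in that relative interior; since $G$ is contained in $\partial B_E$ (every point of a proper face of a convex body lies on the boundary), $p_G \in \partial B_E$, and there is an $(n-1)$-dimensional relatively open ball $U_G \subset G \cap H_G$ around $p_G$.

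The key step is to argue that the assignment $G \mapsto U_G$ can be made injective by a countable coding. For this I would observe that if $G_1 \neq G_2$ are two distinct $(n-1)$-dimensional faces, then their relative interiors are disjoint: a point $z$ in the relative interior of both $G_1$ and $G_2$ would, together with the fact that faces are determined by the smallest face containing a point, force $G_1 = G_2$ — indeed the smallest face containing $z$ is contained in any face containing $z$, and a face with non-empty relative interior in a hyperplane is the smallest face through each of its relative-interior points. Hence the relative interiors $\mathrm{ri}(G_1), \mathrm{ri}(G_2)$ are disjoint subsets of $\partial B_E$. Now $\partial B_E$ is a subset of $\mathbb{R}^n$ and is therefore second countable (hence separable), so it admits a countable dense set $D = \{q_1, q_2, \dots\}$. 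Each $\mathrm{ri}(G)$ is a non-empty relatively open subset of $\partial B_E$ (it is open in $H_G \cap \partial B_E = G$, and since $G$ is $(n-1)$-dimensional with non-empty relative interior, such points are boundary points with a neighborhood basis inside $G$; more carefully, $\mathrm{ri}(G)$ is relatively open in $\partial B_E$ because near $p_G$ the boundary $\partial B_E$ coincides with the hyperplane piece $G$). So $\mathrm{ri}(G)$ meets $D$, and choosing $q_{k(G)} \in \mathrm{ri}(G) \cap D$ gives a map $G \mapsto k(G) \in \mathbb{N}$ which is injective by disjointness of the relative interiors. Therefore $\mathrm{F}^{n-1}(B_E)$ is countable.

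The main obstacle I anticipate is the claim that $\mathrm{ri}(G)$ is relatively open in $\partial B_E$, i.e. that near an interior point of an $(n-1)$-dimensional face the boundary of $B_E$ is flat and contains no extra points of $\partial B_E$ outside $H_G$. This needs the following: if $p \in \mathrm{ri}(G)$ and $x_j \in \partial B_E$ with $x_j \to p$, then eventually $x_j \in H_G$. Suppose not; passing to a subsequence, $x_j \notin H_G$. Since $H_G$ supports $B_E$ we have $\langle u, x_j \rangle < c$ where $H_G = \{x : \langle u, x\rangle = c\}$ and $B_E \subset \{\langle u, \cdot \rangle \le c\}$. But $p$ lies in the relative interior of the $(n-1)$-dimensional set $G \subset H_G$, so a small $(n-1)$-ball around $p$ in $H_G$ lies in $G \subset \partial B_E$; combining this with points $x_j$ strictly on the $\langle u,\cdot\rangle < c$ side and convexity of $B_E$, one produces points of the interior of $B_E$ arbitrarily close to $p$ on the $H_G$ side, contradicting $p \in \partial B_E$ via a standard convexity/dimension-count argument (the convex hull of a full-dimensional set in $H_G$ near $p$ together with an off-hyperplane point contains an $n$-dimensional neighborhood of $p$ minus its $H_G$-slice, forcing interior points on both sides). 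This dimension-counting/convexity lemma is the technical heart; everything else is bookkeeping with separability of $\partial B_E \subset \mathbb{R}^n$.
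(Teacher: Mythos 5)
Your proof is correct and follows essentially the same route as the paper's: the relative interiors of distinct $(n-1)$-dimensional faces are pairwise disjoint, non-empty, relatively open subsets of the second countable boundary $\partial B_E$, hence there are at most countably many of them (the paper simply asserts the disjointness and openness that you take care to justify). One small note on your final convexity step: the cleanest contradiction is that the off-hyperplane boundary points $x_j$ themselves land in $\operatorname{int}(B_E)$ for large $j$ (take the convex hull of the $(n-1)$-ball $U\subset G$ with a small ball around the origin, which covers the half-neighborhood $\{\langle u,\cdot\rangle<c\}$ of $p$ by interior points), rather than a contradiction with $p\in\partial B_E$, since $p$ genuinely is a boundary point.
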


\begin{proof}
Let $\{F_\alpha:\alpha\in I\}$ be the set of (n-1)-dimensional faces of $B_E\subset\mathbb{R}^n$, the unit ball of $E$.  Their interiors with respect to the relative topology of the sphere $S_E$, the boundary of $B_E$, form a disjoint family of non-empty open sets in $S_E$.  Since $S_E$ is second countable, $I$ is at most countable.
\end{proof}

The next proposition states if $E$ and $F$ are isometric, then 
$\mathrm{F^{n-1}(B_E)}$ and 
$\mathrm{F^{n-1}(B_F)}$ are equal up to affine maps.  This provides a criterion to show that two normed spaces are not isometric.  We are to exhibit an $n-1$ dimensional face that is not in the other.  Notice that there are uncountably many non-isometric convex bodies in $\mathbb{R}^{n-1}$, if $n\geq 3$.  Since (after an affine map) any convex body $K$ in $\mathbb{R}^{n-1}$ can be a face of an $n$-dimensional normed space, there are a lot of options.

We need the following result, that is easy to prove:

\begin{lem} \label{affine copy}
Let $E$ and $F$ be isometric $n$-dimensional normed spaces and let $Q \in \mathrm{F^{n-1}(B_E)}$.  Then there exists $Q' \in \mathrm{F^{n-1}(B_E)}$ that is an affine copy of $F$. 
\end{lem}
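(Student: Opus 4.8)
The plan is to transport the face $Q$ through the isometry. Since $d(E,F)=1$ and the infimum defining the Banach--Mazur distance is attained, there is a linear isomorphism $T:\mathbb{R}^n\to\mathbb{R}^n$ that, after rescaling, satisfies $\|T\|=\|T^{-1}\|=1$; then $T(B_E)\subseteq B_F$ and $T^{-1}(B_F)\subseteq B_E$, and combining these gives $T(B_E)=B_F$. (Equivalently, an isometry between normed spaces fixing the origin is linear and carries one unit ball onto the other.) So the two situations differ only by the linear bijection $T$, and I would simply set $Q':=T(Q)$ and verify that it is an $(n-1)$-dimensional face of $B_F$ that is affinely equivalent to $Q$.

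The verification reduces to three standard facts about an affine bijection $T$ of $\mathbb{R}^n$ applied to the convex bodies $K:=B_E$ and $T(K)=B_F$. First, $T$ sends faces to faces: if $z'\in T(Q)$ lies in an open segment $(x',y')$ with $x',y'\in T(K)$, then $T^{-1}z'\in Q$ lies in $(T^{-1}x',T^{-1}y')$ with $T^{-1}x',T^{-1}y'\in K$, so the face property of $Q$ forces $T^{-1}x',T^{-1}y'\in Q$, hence $x',y'\in T(Q)$. Second, $T$ sends exposed faces to exposed faces: if $H=\{\ell=c\}$ is a supporting hyperplane of $K$ with $Q=H\cap K$, then $T(H)=\{\ell\circ T^{-1}=c\}$ is a hyperplane, $\ell\circ T^{-1}\le c$ on $T(K)$, and $T(H)\cap T(K)=T(H\cap K)=T(Q)$, so $T(Q)$ is exposed. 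Third, $T$ preserves affine dimension and relative interiors, so if $Q$ has nonempty interior relative to its exposing hyperplane $H$ (the condition for $Q\in\mathrm{F^{n-1}}(K)$), then $T(Q)$ has nonempty interior relative to $T(H)$, which is an $(n-1)$-dimensional affine subspace. Hence $Q'=T(Q)\in\mathrm{F^{n-1}(B_F)}$.

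Finally, $T$ restricted to $Q$ is an injective affine map onto $Q'$, so $Q'$ is an affine copy of $Q$, which is what is required (I read the statement with $Q'\in\mathrm{F^{n-1}(B_F)}$ and "affine copy of $Q$", the only reading consistent with the surrounding discussion). I do not expect any genuine obstacle: every step is an immediate consequence of the fact that an isometry of finite-dimensional normed spaces is a linear bijection of the ambient $\mathbb{R}^n$ carrying one unit ball onto the other, together with the affine invariance of the notions \emph{face}, \emph{exposed}, and \emph{$(n-1)$-dimensional}. The only point deserving the slightest care is matching the precise wording of the definition of $\mathrm{F^{n-1}}$ (nonempty relative interior inside the exposing hyperplane), but this property is preserved by affine bijections as well.
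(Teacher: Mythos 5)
Your proof is correct, and it supplies exactly the routine verification the paper omits (the paper states this lemma without proof, calling it ``easy to prove''): a linear isometry attaining $d(E,F)=1$ carries $B_E$ onto $B_F$ and preserves faces, exposedness, and relative interiors, so it maps $\mathrm{F^{n-1}}(B_E)$ bijectively onto $\mathrm{F^{n-1}}(B_F)$ by affine copies. You also correctly repaired the typos in the statement ($Q'\in\mathrm{F^{n-1}}(B_F)$ an affine copy of $Q$), which is the only reading consistent with how the lemma is used in Theorem~\ref{main theorem}.
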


We state and show our main result in two parts, first one deals with $BM_n$ for $n \geq 3 $, the other with $BM_2$.

\begin{theorem}\label{main theorem}
Let $n\geq3$ and let $E$ and $F$ be two non-isometric $n$-dimensional normed spaces satisfying $B_E\subset B_F\subset d(E,F) B_E$.  Then for each $\lambda \in (0,1)$ the set $\{ X \in BM_n : C_{\lambda} \subset B_X\subset B_{\lambda} \}$ contains uncountably many non-isometric spaces.  
\end{theorem}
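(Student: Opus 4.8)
The plan is to use Lemma~\ref{separation} to create room between $C_\lambda$ and $B_\lambda$, and then to fill that room with a continuum of convex bodies whose boundaries carry pairwise non-affinely-equivalent $(n-1)$-dimensional faces; Lemma~\ref{affine copy} together with Lemma~\ref{perfect faces} will then force these bodies to be mutually non-isometric as normed spaces. First I would invoke the proof of Lemma~\ref{separation}: there is a point $x_n \in B_\lambda \setminus C_\lambda$ lying in the interior $d(E,F)^\lambda B_E^\circ$, and a linear functional $f$ with $f(x_n)=1$ and $f \le 1$ on $B_F \supset C_\lambda$. In fact one can say more: since $f(y)<1$ strictly for $y$ in the $\tfrac{1}{d(E,F)^{1-\lambda}}B_F$ part, the slab $\{f \ge 1-\varepsilon\} \cap B_\lambda$ is a nondegenerate cap of $B_\lambda$ that is disjoint from $C_\lambda$ (shrinking $\varepsilon$), and since $n \ge 3$ this cap is genuinely $(n-1)$-dimensional — it contains a small $(n-1)$-dimensional convex body's worth of freedom near $x_n$.

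Next I would construct the family. Fix the cap $\mathcal{K} := \{f \ge 1-\varepsilon\} \cap B_\lambda$, disjoint from $C_\lambda$. For a convex body $W$ sitting inside $\mathcal{K}$ (more precisely, inside a translate/scaled copy of an $(n-1)$-dimensional disk inside the hyperplane $\{f=1-\varepsilon\}$), let $B_X := \mathrm{Conv}\big[C_\lambda \cup W\big]$. By construction $C_\lambda \subset B_X$, and $B_X \subset B_\lambda$ because $B_\lambda$ is convex and contains both $C_\lambda$ and $\mathcal{K} \supset W$. Choosing $W$ so that it becomes an exposed $(n-1)$-dimensional face of $B_X$ lying in the hyperplane $\{f = 1-\varepsilon\}$ — which holds as long as $W$ is flat in that hyperplane and $C_\lambda$ lies strictly on the side $f < 1-\varepsilon$, true since $C_\lambda \subset \{f \le 1\}$ with the relevant piece strict, so after shrinking we may assume $C_\lambda \subset \{f < 1-\varepsilon\}$ — we obtain $W \in \mathrm{F^{n-1}}(B_X)$. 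Now let $\{W_t : t \in T\}$ be an uncountable family of pairwise non-affinely-equivalent $(n-1)$-dimensional convex bodies that fit (after a fixed affine normalization) inside that hyperplane slice; such a family exists for $n \ge 3$, e.g. triangles-with-a-marked-proportion constructions, polygons with varying side-length ratios, or smooth bodies with varying curvature profiles. This produces $\{B_{X_t} : t \in T\}$ with $C_\lambda \subset B_{X_t} \subset B_\lambda$.

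Finally I would check the spaces $X_t$ are pairwise non-isometric. Suppose $X_s$ and $X_t$ are isometric. By Lemma~\ref{affine copy}, every $(n-1)$-dimensional face of $B_{X_s}$ appears, up to an affine map, as an $(n-1)$-dimensional face of $B_{X_t}$ (and conversely). The face $W_s$ is one such; since $B_{X_t}$ has only countably many $(n-1)$-dimensional faces by Lemma~\ref{perfect faces}, and since by design no $W_u$ with $u \ne s$ is an affine copy of $W_s$, we would need $W_s$ to be an affine copy of one of the finitely-many ``old'' faces of $C_\lambda$ — but here one must arrange the normalization so that the $W_t$ are also not affine copies of any face of $C_\lambda$; this is harmless since $C_\lambda$ has only countably many $(n-1)$-faces and we have a continuum of $W_t$'s to choose from, so we simply discard the countably many bad parameters. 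Hence $W_s$ is an affine copy of $W_t$, forcing $s = t$. The main obstacle — the step I expect to require the most care — is verifying that $W_t$ really is an $(n-1)$-dimensional \emph{face} of $B_{X_t}$ and not merely a subset of the boundary that gets ``absorbed'' into a larger face when we take the convex hull with $C_\lambda$; this is where the strict separation $C_\lambda \subset \{f < 1-\varepsilon\}$ (obtained by shrinking the cap, using that $x_n$ is interior to $d(E,F)^\lambda B_E$ and that $f<1$ strictly on the $B_F$-generator of $C_\lambda$) does the work, guaranteeing the supporting hyperplane $\{f = 1-\varepsilon\}$ meets $B_{X_t}$ exactly in $W_t$.
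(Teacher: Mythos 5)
Your proposal is correct and follows essentially the same route as the paper: use Lemma~\ref{separation} to find a point of $B_\lambda^{\circ}\setminus C_\lambda$, strictly separate it from $C_\lambda$ by a hyperplane, plant an $(n-1)$-dimensional convex body $W$ in that hyperplane as an exposed face of $\mathrm{Conv}(C_\lambda\cup W\cup -W)$, and then combine Lemma~\ref{perfect faces} with Lemma~\ref{affine copy} and the uncountability of non-affinely-equivalent $(n-1)$-dimensional bodies. Two small repairs: the hull must include $-W$ so that $B_X$ is centrally symmetric (hence actually a unit ball), and you should take a functional that \emph{strictly} separates the compact convex set $C_\lambda$ from the cap (the functional recalled from the proof of Lemma~\ref{separation} only gives $f\le 1$ on $B_F\supset C_\lambda$, which does not by itself rule out $\sup_{C_\lambda}f=1$).
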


\begin{proof}
 By Lemma \ref{separation} 
there exists $x \in B_{\lambda}\setminus C_{\lambda}$ and since $C_\lambda$ is closed, we can assume that $x\in B_{\lambda}^{\circ}\setminus C_\lambda$.  Find $\epsilon>0$ such that $B_{\epsilon} (x) \subset B_{\lambda}^{\circ} \setminus C_{\lambda}$.  Then find a linear function $f:\mathbb{R}^n\to\mathbb{R}$ seperating $\{x\}$ from $C_\lambda$.  Assume that $f(x)=1$ and that for all $y\in C_\lambda$, $f(y)<1$.  Let $H=\{z\in\mathbb{R}^n:f(z)=1\}$ be the hyperplane induced by $f$ at $x$. 
  By Lemma \ref{perfect faces}  
the collections $\mathrm{F}^{n-1}B_{\lambda},$ $\mathrm{F}^{n-1}C_{\lambda}$ are countable. Since there are uncountably many non-isometric $n-1$-dimensional Banach spaces, choose a unit ball $K$ in $\mathbb{R}^{n-1}$ that is not affinely isometric to a ball in either collection. Find an affine copy of $K$ inside $H \cap B_\epsilon(x)$ and call it $K'$. Define the $n$-dimensional normed space $X_K$ to have unit ball equal 
  $ B_X: =B_{X_K}:=$Conv$\left(C_\lambda \cup K' \cup -K'\right).$
Then $C_\lambda\subset B_{X_K}\subset B_\lambda$ and $B_{X_K}$ has an $n-1$ dimensional face isometric to $K$. So by Lemma \ref{affine copy} 
it is not isometric to $C_{\lambda}$ nor $B_{\lambda}$. Since the construction produces uncountably many non-isometric spaces the claim follows.
\end{proof}
The above construction fails for $n=2$ since all 1-dimensional normed spaces are isometric. We tackle the $n=2$ case in the next section.   We need some elementary results to show that there are uncountably many geodesics between $E$ and $F$.  

\begin{lem}\label{intermediate}
Suppose that $X$ is an intermediate space between $E$ and $F$, that $Y$ is an intermediate space between $E$ and $X$ and that $Z$ is an intermediate space between $X$ and $F$.  Then $X$ is an intermediate space between $Y$ and $Z$.
\end{lem}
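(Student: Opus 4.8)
The plan is to chain together the defining multiplicative equalities and use the triangle inequality $d(E,F)\le d(E,H)d(H,F)$ in the right order. Write $d=d(E,F)$. Since $X$ is intermediate between $E$ and $F$, pick $\lambda\in[0,1]$ with $d(E,X)=d^{\lambda}$ and $d(X,F)=d^{1-\lambda}$. Since $Y$ is intermediate between $E$ and $X$, write $d(E,Y)=d(E,X)^{\mu}=d^{\lambda\mu}$ and $d(Y,X)=d(E,X)^{1-\mu}=d^{\lambda(1-\mu)}$ for some $\mu\in[0,1]$. Likewise, since $Z$ is intermediate between $X$ and $F$, write $d(X,Z)=d(X,F)^{\nu}=d^{(1-\lambda)\nu}$ and $d(Z,F)=d(X,F)^{1-\nu}=d^{(1-\lambda)(1-\nu)}$ for some $\nu\in[0,1]$.

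First I would establish the upper bound $d(Y,Z)\le d(Y,X)d(X,Z)=d^{\lambda(1-\mu)}d^{(1-\lambda)\nu}$, which is just the triangle inequality applied to the path $Y\to X\to Z$. For the matching lower bound I would argue that any path from $Y$ to $Z$ that is forced to ``pass near'' $E$ and $F$ must be long; concretely, use the triangle inequality along $E\to Y\to Z\to F$ to get
\[
d = d(E,F)\le d(E,Y)\,d(Y,Z)\,d(Z,F)=d^{\lambda\mu}\,d(Y,Z)\,d^{(1-\lambda)(1-\nu)},
\]
so that $d(Y,Z)\ge d^{1-\lambda\mu-(1-\lambda)(1-\nu)}=d^{\lambda(1-\mu)+(1-\lambda)\nu}$. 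Comparing with the upper bound, both equal $d^{\lambda(1-\mu)+(1-\lambda)\nu}$, hence $d(Y,Z)=d(Y,X)\,d(X,Z)$, which is exactly the statement that $X$ is intermediate between $Y$ and $Z$. (One should also record the harmless degenerate cases $d=1$, where everything is isometric, and $\lambda\in\{0,1\}$ or $\mu,\nu\in\{0,1\}$, where the identities are immediate.)

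The only mild subtlety, and the step I would be most careful about, is the bookkeeping of exponents: one must be sure the chosen $\mu,\nu$ exist (they do, since $d(E,Y)\in[1,d(E,X)]$ forces $d(E,Y)=d(E,X)^{\mu}$ for a unique $\mu\in[0,1]$ once $d(E,X)>1$, and similarly for $\nu$) and that the two bounds on $d(Y,Z)$ really have the same exponent $\lambda(1-\mu)+(1-\lambda)\nu$. There is no geometric obstacle here — no convex-body construction is needed — the whole argument is a four-line manipulation of the inequality $d(A,C)\le d(A,B)d(B,C)$ together with the hypotheses, so I expect the write-up to be short.
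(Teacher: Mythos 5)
Your proposal is correct and is essentially the paper's argument: both rest on the same two applications of the triangle inequality, namely $d(E,F)\le d(E,Y)\,d(Y,Z)\,d(Z,F)$ and $d(Y,Z)\le d(Y,X)\,d(X,Z)$, whose combination collapses to the equality $d(Y,Z)=d(Y,X)\,d(X,Z)$. The paper dispenses with the exponents $\lambda,\mu,\nu$ entirely by writing the single chain $d(E,F)\le d(E,Y)d(Y,Z)d(Z,F)\le d(E,Y)d(Y,X)d(X,Z)d(Z,F)=d(E,X)d(X,F)=d(E,F)$ and concluding that all inequalities are equalities, so your bookkeeping (and the attendant worry about degenerate cases) is harmless but unnecessary.
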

\begin{proof}
This follows from the triangle inequality:
$d(E,F)\leq d(E,Y)d(Y,Z)d(Z,F)\leq d(E,Y)d(Y,X)d(X,Z)d(Z,F)=d(E,X)d(X,F)=d(E,F).$  Then the inequalities are equalities and $d(Y,Z)=d(Y,X)d(X,Z)$.
\end{proof}
\begin{cor} \label{intermediate geodesic}
If $X$ is an intermediate space between $E$ and $F$, then there exists a geodesic from $E$ to $F$ containing $X$. 
\end{cor}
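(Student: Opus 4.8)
The plan is to build the required geodesic by \emph{concatenation}: produce a geodesic from $E$ to $X$, produce a geodesic from $X$ to $F$, glue them at $X$, and then check with Lemma \ref{characterization} that the resulting path is a geodesic from $E$ to $F$. First I would dispose of the degenerate cases: if $X$ is isometric to $E$ or to $F$, then $X$ is, as a point of $BM_n$, one of the endpoints, so any geodesic from $E$ to $F$ --- for instance one of those given by Corollary \ref{geodesics} --- already contains it. Hence I may assume $E$, $X$, $F$ are pairwise non-isometric.

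Next I would put the three spaces into the canonical position of Section~2. Since $X$ is intermediate, $d(E,F)=d(E,X)d(X,F)$, and since the Banach--Mazur infimum is attained I can choose isomorphisms $S\colon E\to X$ and $T\colon X\to F$ with $\|S\|=\|T\|=1$, $\|S^{-1}\|=d(E,X)$ and $\|T^{-1}\|=d(X,F)$. Then $S(B_E)\subset B_X\subset d(E,X)S(B_E)$ and $B_X\subset T^{-1}(B_F)\subset d(X,F)B_X$. Replacing $E$ by $(\mathbb{R}^n,S(B_E))$ and $F$ by $(\mathbb{R}^n,T^{-1}(B_F))$, and keeping $X=(\mathbb{R}^n,B_X)$, I get $B_E\subset B_X\subset d(E,X)B_E$, $B_X\subset B_F\subset d(X,F)B_X$, and, composing these, $B_E\subset B_F\subset d(E,F)B_E$. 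Now both pairs $(E,X)$ and $(X,F)$ satisfy the hypotheses of Corollary \ref{geodesics}, which supplies a geodesic $\gamma_1\colon[0,1/2]\to BM_n$ from $E$ to $X$ and a geodesic $\gamma_2\colon[1/2,1]\to BM_n$ from $X$ to $F$; let $\gamma\colon[0,1]\to BM_n$ be their concatenation, so $\gamma(0)=E$, $\gamma(1/2)=X$, $\gamma(1)=F$.

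Finally I would verify, using Lemma \ref{characterization}, that $\prod_{i=1}^m d(\gamma(t_i),\gamma(t_{i-1}))=d(E,F)$ for every partition $0=t_0<t_1<\cdots<t_m=1$. The submultiplicativity $d(A,C)\le d(A,B)d(B,C)$ yields two facts: (i) by telescoping, every such product is $\ge d(E,F)$; and (ii) inserting one more point into a partition never decreases the product. When the partition already contains $1/2$, say $t_j=1/2$, the product splits as the product over $i\le j$ times the product over $i>j$, and these equal $d(E,X)$ and $d(X,F)$ respectively, since $\gamma_1$ and $\gamma_2$ are geodesics (Lemma \ref{characterization} applied to each), so the total is $d(E,X)d(X,F)=d(E,F)$. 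Then by (ii) an arbitrary partition has product $\le d(E,F)$, while by (i) it is $\ge d(E,F)$; hence equality holds for all partitions and $\gamma$ is a geodesic from $E$ to $F$ through $X$. The one genuinely delicate point is this last step: Lemma \ref{characterization} ranges over \emph{all} partitions, and a partition of $[0,1]$ need not use the gluing parameter $1/2$, so one cannot simply ``refine and compute'' --- it is the combination of the monotonicity facts (i) and (ii) that transfers the equality from refined partitions to all of them; everything else is routine bookkeeping.
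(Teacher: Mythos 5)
Your proof is correct and follows essentially the same route as the paper: concatenate the two geodesics from Corollary \ref{geodesics} at $X$ and verify the condition of Lemma \ref{characterization} by adjoining $X$ to any partition that misses it. The only cosmetic difference is that where the paper cites Lemma \ref{intermediate} for the interval straddling $X$, you re-derive that equality via the two-sided sandwich (telescoping lower bound plus refinement monotonicity), which is the same argument inlined.
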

\begin{proof}
Use Corollary \ref{geodesics} to construct geodesics from $E$ to $X$, from $X$ and $F$ and put them together.  To see that the joined path is a geodesic from $F$ to $E$ we use Lemma \ref{intermediate} to show it satisfies the condition of Lemma \ref{characterization}.  If the partition contains $X$, it follows from the fact that $X$ is an intermediate space and that both pieces are geodesics.  And if the partition does not include $X$, we use Lemma \ref{intermediate}, adding $X$ to the partition.
\end{proof}

Combining Theorem \ref{main theorem} with the previous Corollary we get:

\begin{cor}
There are uncountably many geodesics between any two non-isomorphic $n$-dimensional normed spaces for $n\geq3$.  
\end{cor}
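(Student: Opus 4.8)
The plan is to route Theorem~\ref{main theorem} through Corollary~\ref{intermediate geodesic}; the one genuinely new ingredient is a rigidity fact, namely that a geodesic from $E$ to $F$ meets a fixed ``distance level'' in only one point of $BM_n$. First I would normalize: since in finite dimensions ``non-isomorphic'' just means $d(E,F)>1$, replace $F$ by an isometric copy so that $B_E\subset B_F\subset d(E,F)B_E$, and fix once and for all some $\lambda\in(0,1)$ (the argument is independent of the choice). By Theorem~\ref{main theorem} the set $\mathcal{X}=\{X\in BM_n:C_\lambda\subset B_X\subset B_\lambda\}$ contains an uncountable subfamily $\mathcal{X}_0$ of pairwise non-isometric spaces. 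For any $X$ with $C_\lambda\subset B_X\subset B_\lambda$, substituting the chains $B_E\subset C_\lambda\subset B_X\subset B_\lambda\subset d(E,F)^\lambda B_E$ and $B_X\subset B_F\subset d(E,F)^{1-\lambda}C_\lambda\subset d(E,F)^{1-\lambda}B_X$ --- whose outer inclusions are taken from Lemma~\ref{inclusion} and the proof of Proposition~\ref{extreme_spaces} --- into $d(E,F)\le d(E,X)\,d(X,F)$ forces $d(E,X)=d(E,F)^\lambda$ and shows that $X$ is intermediate between $E$ and $F$. Hence by Corollary~\ref{intermediate geodesic} each $X\in\mathcal{X}_0$ lies on some geodesic $\gamma_X\colon[a,b]\to BM_n$ from $E$ to $F$.

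The heart of the proof is then the claim that the assignment $X\mapsto\gamma_X$ is injective on $\mathcal{X}_0$, which I would reduce to: \emph{any geodesic $\gamma$ from $E$ to $F$ contains at most one isometry class $Z$ with $d(E,Z)=d(E,F)^\lambda$}. To see this, let $Z_i=\gamma(t_i)$ ($i=1,2$) lie in the range of $\gamma\colon[a,b]\to BM_n$ with $d(E,Z_i)=d(E,F)^\lambda$. Because $\lambda\in(0,1)$ and $d(E,F)>1$, the number $d(E,F)^\lambda$ equals neither $d(E,\gamma(a))=1$ nor $d(E,\gamma(b))=d(E,F)$, so $t_1,t_2\in(a,b)$; we may assume $t_1<t_2$. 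Applying Lemma~\ref{characterization} to the partitions $a<t_2<b$ and $a<t_1<t_2<b$ gives
\[
d(E,F)=d(E,\gamma(t_2))\,d(\gamma(t_2),F)=d(E,\gamma(t_1))\,d(\gamma(t_1),\gamma(t_2))\,d(\gamma(t_2),F),
\]
and cancelling $d(\gamma(t_2),F)$ together with $d(E,\gamma(t_1))=d(E,\gamma(t_2))=d(E,F)^\lambda$ yields $d(\gamma(t_1),\gamma(t_2))=1$, i.e.\ $Z_1$ and $Z_2$ are isometric. This proves the claim.

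To finish, I would argue by contradiction: if $X\neq Y$ in $\mathcal{X}_0$ but $\gamma_X=\gamma_Y$ as paths, then $X$ and $Y$ both lie in the range of this one geodesic and both satisfy $d(E,\cdot)=d(E,F)^\lambda$, so the claim forces $X$ and $Y$ to be isometric, contradicting the choice of $\mathcal{X}_0$. Therefore $X\mapsto\gamma_X$ is injective on the uncountable set $\mathcal{X}_0$, and there are uncountably many geodesics from $E$ to $F$. The only step requiring real care is the rigidity claim of the second paragraph --- in particular the verification that the partitions fed into Lemma~\ref{characterization} are non-degenerate, which is exactly where the hypotheses $\lambda\in(0,1)$ and $d(E,F)>1$ are used; everything else is bookkeeping with results already established.
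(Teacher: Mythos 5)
Your proof is correct and follows the same route as the paper, which simply combines Theorem~\ref{main theorem} with Corollary~\ref{intermediate geodesic}. The rigidity claim you isolate --- that a geodesic from $E$ to $F$ meets the level $d(E,\cdot)=d(E,F)^{\lambda}$ in at most one isometry class, proved by comparing the partitions $a<t_2<b$ and $a<t_1<t_2<b$ in Lemma~\ref{characterization} --- is exactly the injectivity bookkeeping the paper leaves implicit, and your verification of it is sound.
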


\section{Dimension 2}

 In what follows $K$ is a compact, convex and symmetric body of $\mathbb{R}^2$. By Lemma \ref{perfect faces} 
there are at most countably many perfect $1$-faces \textit{i.e.} line segments 
$\mathrm{F}^{1}K$. Consider the set of triangles formed by joining the endpoints of line segments $[p,q]$ in $F^1K$ to the origin, $\bigtriangleup 0pq$ .  Call this set 
$\mathcal{S}_K:= \{ \bigtriangleup 0pq : [p,q] \in \mathrm{F}^1K  \}$ and consider all possible ratios of areas of these triangles  $\mathcal{A}_K:= \{  \frac{\mu(T_1)}{\mu(T_2)} : T_1,T_2 \in S_K  \}$, where $\mu$ is the usual Lebesgue area measure. Since $\mu(\phi(T))=$det$(\phi)(\mu(T))$ we arrive at the following countable isometric invariant:

\begin{lem}\label{invariant}
Let $\phi : (\mathbb{R}^2, B_E) \to (\mathbb{R}^2, B_F)$ be an invertible linear map with $\phi(B_E)= B_F$. Then $\mathcal{A}_{B_E}=\mathcal{A}_{B_F}$.
\end{lem}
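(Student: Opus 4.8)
The plan is to show that an invertible linear map $\phi$ with $\phi(B_E)=B_F$ carries the distinguished triangle set $\mathcal{S}_{B_E}$ onto $\mathcal{S}_{B_F}$, and then deduce the equality of the ratio sets from the multiplicativity of the determinant under composition.

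First I would observe that a linear isomorphism $\phi$ maps faces of $B_E$ to faces of $B_F$: if $[p,q]$ is a $1$-dimensional face of $B_E$, then $[\phi(p),\phi(q)]$ is a $1$-dimensional face of $B_F=\phi(B_E)$, since $\phi$ is an affine bijection and the defining property of a face in the Definition preceding Lemma \ref{perfect faces} is affine-invariant (and the relative interior in the boundary is preserved by the homeomorphism $\phi$). Conversely $\phi^{-1}$ does the same, so $\phi$ induces a bijection $\mathrm{F}^1 B_E \to \mathrm{F}^1 B_F$. Because $\phi$ is linear it fixes the origin, so it sends the triangle $\bigtriangleup 0pq$ with vertices $0,p,q$ to the triangle $\bigtriangleup 0\phi(p)\phi(q)$; hence $\phi(\mathcal{S}_{B_E})=\mathcal{S}_{B_F}$ as a set-level bijection, where I write $\phi(T)$ for the image triangle.

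Next I would use the stated area-scaling identity $\mu(\phi(T))=\det(\phi)\,\mu(T)$ (more precisely $|\det\phi|$, but the sign is irrelevant here). Given any $T_1,T_2\in\mathcal{S}_{B_E}$, the triangles $\phi(T_1),\phi(T_2)$ lie in $\mathcal{S}_{B_F}$ and
\[
\frac{\mu(\phi(T_1))}{\mu(\phi(T_2))}=\frac{|\det\phi|\,\mu(T_1)}{|\det\phi|\,\mu(T_2)}=\frac{\mu(T_1)}{\mu(T_2)},
\]
so every element of $\mathcal{A}_{B_E}$ appears in $\mathcal{A}_{B_F}$. Applying the same argument to $\phi^{-1}$ (which satisfies $\phi^{-1}(B_F)=B_E$) gives the reverse inclusion, and therefore $\mathcal{A}_{B_E}=\mathcal{A}_{B_F}$.

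I do not expect a genuine obstacle here; the statement is deliberately elementary. The only point requiring a little care is the claim that $\phi$ induces a bijection between the sets of $(n-1)$-dimensional (here $1$-dimensional) faces — one must check both that $\phi$ sends a face to a face and that it preserves the condition of having non-empty relative interior inside the exposing hyperplane, which follows because $\phi$ maps the exposing hyperplane of a face of $B_E$ to an exposing hyperplane of its image and is a homeomorphism of the boundary spheres. Everything else is bookkeeping with the determinant.
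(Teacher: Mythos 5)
Your proof is correct and follows the same route the paper intends: the paper gives no formal proof, only the one-line remark that $\mu(\phi(T))=\det(\phi)\mu(T)$ makes the ratio set an invariant, and your argument simply fills in the details (the induced bijection on $1$-faces and the cancellation of $|\det\phi|$ in the ratios). No issues.
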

Now we prove Theorem \ref{main theorem}, 
for $n=2$.  The idea is to find balls $B_q$ satisfying $C_\lambda\subset B_q\subset B_\lambda$, with two contiguous faces $[p_1,q]$ and $[q,p_2]$ such that 
$\frac{\mu(\bigtriangleup 0p_1q)}{ \mu(\bigtriangleup 0p_2q)  }$ 
does not belong to $\mathcal{A}_{B_E}$ or $\mathcal{A}_{B_F}$.

\begin{theorem}\label{n=2}
Suppose that $n=2$ and that $E$ and $F$ are two non-isometric normed spaces satisfying $B_E\subset B_F\subset d(E,F) B_E$.  Then for each $\lambda \in (0,1)$ the set $\{ X \in BM_n : C_{\lambda} \subset B_X\subset B_{\lambda} \}$ contains uncountably many non-isometric spaces.  
\end{theorem}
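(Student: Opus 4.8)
Here is how I would prove Theorem~\ref{n=2}.

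\medskip

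The plan is to transplant the proof of Theorem~\ref{main theorem} to the plane, using the numerical invariant $\mathcal{A}$ of Lemma~\ref{invariant} in place of the $(n-1)$-dimensional face that was available when $n\ge3$: instead of gluing a generic facet onto $C_\lambda$, I would glue a single new vertex to $C_\lambda$ and read off a real number from the two edges it creates.

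First I would set up the family. By Lemma~\ref{separation} we have $C_\lambda\subsetneq B_\lambda$. The exposed points of $C_\lambda$ are dense in $\partial C_\lambda$ and cannot all lie on $\partial B_\lambda$ (that would force $C_\lambda=B_\lambda$), so I fix an exposed point $b^{\ast}\in\partial C_\lambda\cap B_\lambda^{\circ}$ with a supporting line $L$ meeting $C_\lambda$ only at $b^{\ast}$; note that the line through the interior point $0$ and $b^{\ast}$ meets $\partial C_\lambda$ in exactly the two points $\pm b^{\ast}$. Choose $x\in L$ close enough to $b^{\ast}$, and $\epsilon>0$, so that $B_{\epsilon}(x)\subset B_\lambda^{\circ}\setminus C_\lambda$ and $I:=L\cap B_{\epsilon}(x)$ is a nondegenerate segment. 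For $q\in B_{\epsilon}(x)$ put $B_q:=\mathrm{Conv}(C_\lambda\cup\{q,-q\})$ and $X_q:=(\mathbb{R}^2,B_q)$. Since $C_\lambda\subset B_\lambda$ (Lemma~\ref{inclusion}) and $\pm q\in B_\lambda$, convexity of $B_\lambda$ gives $C_\lambda\subset B_q\subset B_\lambda$, so each $X_q$ lies in the set in the statement. As $q\notin C_\lambda$ there are exactly two supporting lines of $C_\lambda$ through $q$; writing $b_1(q),b_2(q)\in\partial C_\lambda$ for the endpoints nearest $q$ of the faces they cut out, one checks that $\partial B_q$ is $\partial C_\lambda$ with the arc from $b_1(q)$ to $b_2(q)$ on the $q$-side replaced by $[b_1(q),q]\cup[q,b_2(q)]$ (and the centrally symmetric picture near $-q$). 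Hence $[b_1(q),q]$ and $[q,b_2(q)]$ are contiguous $1$-faces of $B_q$ at the vertex $q$, so $r(q):=\mu(\bigtriangleup 0\,b_1(q)\,q)/\mu(\bigtriangleup 0\,b_2(q)\,q)\in\mathcal{A}_{B_q}$.

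The heart of the proof — and the only step I expect to be genuinely delicate — is to show that $q\mapsto r(q)$ is not constant. For $q\in I$ the line $L$ is one of the two supporting lines through $q$, so $b_1(q)\equiv b^{\ast}$ is fixed while $b_2(q)$ moves monotonically along $\partial C_\lambda$. Parametrizing $q=b^{\ast}+te$ with $e$ a unit direction of $L$, the numerator of $r(q)$ is $\tfrac12 t\,|\det[b^{\ast},e]|$ and the denominator is $\tfrac12\,|\det[b_2(q),b^{\ast}]+t\det[b_2(q),e]|$. As $t$ varies (over a short subinterval near $0$), $b_2(q)$ alternately stays at a vertex of $C_\lambda$ and sweeps a strictly convex arc; on a vertex-arc $r(q)=\frac{at}{|b+ct|}$ with $a=|\det[b^{\ast},e]|\neq0$ and $b=\det[b_2(q),b^{\ast}]\neq0$ (the latter because the only boundary points parallel to $b^{\ast}$ are $\pm b^{\ast}$, neither of which is this vertex once $I$ is small), so $r$ is nonconstant there; on a strictly convex arc, $r$ constant would force $t\mapsto\det[b_2(q),e]$ constant, i.e.\ $b_2(q)$ to move along a line, a contradiction. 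Either way $r$ is continuous and nonconstant on some subinterval of $I$, hence has uncountable range; then I pick an uncountable $Q\subset I$ on which $r$ is injective.

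To finish: if $X_q\cong X_{q'}$ for $q,q'\in Q$, then $\mathcal{A}_{B_q}=\mathcal{A}_{B_{q'}}$ by Lemma~\ref{invariant}, and since $r(q)\in\mathcal{A}_{B_q}$ this gives $r(q)\in\mathcal{A}_{B_{q'}}$. But $\mathcal{A}_{B_{q'}}$ is countable (Lemma~\ref{perfect faces}) and $r|_Q$ is injective, so for each fixed $q'$ only countably many $q\in Q$ can satisfy $r(q)\in\mathcal{A}_{B_{q'}}$. Hence every isometry class contains only countably many of the spaces $X_q$, $q\in Q$, so $\{X_q:q\in Q\}$ meets uncountably many isometry classes, all lying inside $\{X:C_\lambda\subset B_X\subset B_\lambda\}$, which is the assertion. (Discarding the further countably many $q$ with $r(q)\in\mathcal{A}_{B_E}\cup\mathcal{A}_{B_F}$ makes each $X_q$, in addition, non-isometric to $E$ and to $F$.) The main obstacle is the nonconstancy of $r$: it is a soft fact about convex bodies, but the degenerate possibility that the moving contact point $b_2(q)$ slides along a flat piece of $\partial C_\lambda$ must be excluded, which is exactly why the location $b^{\ast}$ and the line $L$ are chosen at the outset rather than letting $q$ roam over an arbitrary part of $B_\lambda^{\circ}\setminus C_\lambda$.
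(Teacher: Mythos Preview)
Your overall strategy matches the paper's: use the area–ratio invariant $\mathcal{A}$ of Lemma~\ref{invariant}, build a one–parameter family $B_q$ with $C_\lambda\subset B_q\subset B_\lambda$ carrying two contiguous edges at a vertex $q$, and show the associated ratio is nonconstant.  The difference is in how the two edges at $q$ are anchored.  The paper first adjoins a \emph{fixed segment} $[p_1,p_2]$ to $C_\lambda$ (separated from $C_\lambda$ by a linear functional) and only then adds the vertex $q$ above it; the two edges at $q$ are $[p_1,q]$ and $[q,p_2]$ with $p_1,p_2$ independent of $q$, so as $q$ runs over a segment parallel to $[p_1,p_2]$ the ratio $\mu(\bigtriangleup 0p_1q)/\mu(\bigtriangleup 0p_2q)$ is a genuine M\"obius function of the parameter and is nonconstant by inspection.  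You adjoin only the point $q$ to $C_\lambda$, so the far endpoints $b_1(q),b_2(q)$ of the two edges lie on $\partial C_\lambda$ and move with $q$; even after pinning $b_1(q)\equiv b^\ast$ via the supporting line $L$, the second endpoint $b_2(q)$ still moves.

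That moving endpoint is where your argument develops a gap.  In the ``strictly convex arc'' case you assert that $r$ constant would force $t\mapsto\det[b_2(q),e]$ to be constant, but your own formula for the denominator has \emph{both} $\det[b_2(q),b^\ast]$ and $\det[b_2(q),e]$ varying with $t$, and the constancy of $r$ only yields a single relation $\det[b_2(q),b^\ast]+t\det[b_2(q),e]=\pm(a/k)t$ between them; nothing further is derived from the tangency constraint to rule out compensating variation.  When $C_\lambda$ is smooth and strictly convex there are no vertex–arcs at all, so the whole nonconstancy claim rests on this unjustified implication.  The paper's device of first gluing the segment $[p_1,p_2]$ is exactly what eliminates this difficulty: it freezes both endpoints in advance, so no analysis of how tangent points to $C_\lambda$ move is needed.

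A smaller point: your existence argument for $b^\ast$ (``if all exposed points of $C_\lambda$ lay on $\partial B_\lambda$ then $C_\lambda=B_\lambda$'') is false as stated — a square inscribed in a disk has all four exposed points on the circle.  The correct one–line fix is that $\partial C_\lambda\cap B_\lambda^{\circ}$ is nonempty (take the point where the segment from $0$ to any $y\in B_\lambda\setminus C_\lambda$ exits $C_\lambda$) and open in $\partial C_\lambda$, hence by Straszewicz it contains an exposed point.  Your final counting paragraph, on the other hand, is more carefully written than the paper's and is fine.
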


\begin{proof}
 By Lemma \ref{separation} 
there exists $x \in B_{\lambda}\setminus C_{\lambda}$ and since $C_\lambda$ is closed, we can assume that $x\in B_{\lambda}^{\circ}\setminus C_\lambda$.  Find $\epsilon>0$ so that $\overline{B_{\epsilon} (x)} \subset B_{\lambda}^{\circ} \setminus C_{\lambda}$ and find a linear function $f:\mathbb{R}^n\to\mathbb{R}$ separating $\overline{B(x,\epsilon)}$ from $C_{\lambda}$. Then $H=\{z\in\mathbb{R}^2:f(z)=f(x)\}$ is the hyperplane (line in this case) induced by $f$ at $x$; and $H\cap \overline{B(x,\epsilon)}$ is a segment that we denote $[p_1,p_2]$.

As a first step, let $B=\text{conv}(C_\lambda\cup[p_1,p_2]\cup[-p_1,-p_2])$.  Notice that $C_\lambda\subset B\subset B_\lambda$ and that the segment $[p_1,p_2]$ is a face of 
$B$.
For each $q\in\overline{B(x,\epsilon)}$ with $f(q)>f(x)$ (i.e., $q$ is inside $\overline{B(x,\epsilon)}$ but outside the triangle $\bigtriangleup 0p_1p_2$) define
$$B_q=\text{conv}(C_\lambda\cup[p_1,p_2]\cup[-p_1,-p_2])\cup\{q,-q\}).$$
We still have that $C_\lambda\subset B_q\subset B_\lambda$ and it is easy to check that, for $i=1,2$, the segment $[p_i,q]$ is a face of $B_q$ iff the line going through $p_i$ and $q$ does not intersect $C_\lambda$.  Since $f$ separates $\overline{B(x,\epsilon)}$ and $C_\lambda$ there are many points $q$ with this property.  
In fact, it is easy to see that there exists $0<\delta<\epsilon$ small enough so that whenever $q\in\overline{B(x,\delta)}$ and $f(q)>f(x)$, then the segments $[p_1,q]$ and $[q,p_2]$ are faces of $B_q$.

Find two such points $q_1$ and $q_2$ that satisfy $f(q_1)=f(q_2)>f(x)$.  Notice that the segment $[q_1,q_2]$ is parallel to the segment $[p_1,p_2]$ and that for any $q\in[q_1,q_2]$, the segments  $[p_1,q]$ and $[q,p_2]$ are faces of $B_q$.  
Since $[q_1,q_2]$ is connected and since the values of 
$\frac{\mu(\bigtriangleup 0p_1q)}{ \mu(\bigtriangleup 0p_2q)  }$ 
when $q=q_1$ and $q=q_2$ are clearly different, the set of points 
$\left\{  
\frac{\mu(\bigtriangleup 0p_1q)}{ \mu(\bigtriangleup 0p_2q)  }
:q\in[q_1,q_2]\right\}$
is uncountable.  This allows us to choose $q$'s such that $(\mathbb{R}^2, B_q)$ is not isometric to 
$(\mathbb{R}^2, C_\lambda)$ or to $(\mathbb{R}^2, B_\lambda)$.  Moreover, we can easily choose uncountably many non-isometric $(\mathbb{R}^2, B_q)$'s and the result follows.
\end{proof}

This Theorem combined with Corollary \ref{intermediate geodesic} imply that there are uncountably many geodesics between $E$ and $F$, which completes the case $n\geq2$.
The proof of Theorem \ref{n=2} can be adapted to prove Theorem \ref{main theorem} 
for $n \geq 3$.
   
\smallbreak

\bibliographystyle{amsplain}

\end{document}